\colorlet{shadecolor}{orange!15}
\definecolor{Red}{rgb}{0.8,0,0}
\newcommand{\N}{\mathbb{N}}
\newcommand{\R}{\mathbb{R}}
\newtheorem{theorem}{\color{Red}\underline{\textrm Theorem}}
\newenvironment{theo}
  {\begin{shaded}\begin{theorem}}
  {\end{theorem}\end{shaded}}
\newtheorem{lemma}{Lemma}
\author{ Jacopo D'Aurizio \\ {\small Università di Pisa} \\ {\small elianto84@gmail.com}}
\title{\vspace{-2cm}\bfseries A generalization of the Shafer-Fink inequality}
\date{02-03-2013}
\begin{document}
\maketitle
\thispagestyle{empty}
\noindent In this article we will prove some generalizations and extensions of the Shafer-Fink (\cite{Shafer}) double inequality 
for the arctangent function:
\begin{theo}
For any positive real number $x$,
$$ \frac{3 x}{1+2\sqrt{1+x^2}} < \arctan x < \frac{\pi x}{1+2\sqrt{1+x^2}}$$ 
holds.
\end{theo}
\begin{proof}
Following the lines of (\cite{QiZhangGuo}), we consider the substitution $x=\tan\theta$, that gives the following, equivalent form of the inequality:
$$ \forall \theta\in I=(0,\pi/2),\qquad  \theta(\cos\theta + 2)-\pi \sin\theta < 0 < \theta(\cos\theta + 2)-3 \sin\theta. $$
If now we set
$$f_K(\theta) = (\cos\theta + 2)- K\,\frac{ \sin\theta}{\theta}$$
we have:
$$ \theta^2\,\frac{df_K}{d\theta}=(K-\theta^2)\sin\theta-K\theta\cos\theta.$$
Since for any $\theta\in I$ we have:
$$ \frac{\theta}{\tan \theta}< 1-\frac{\theta^2}{3}< 1-\frac{\theta^2}{\pi}, $$
$f_3(\theta)$ ed $f_\pi(\theta)$ are both non-decreasing on $I$, in virtue of $\frac{df_K}{d\theta}\geq 0$;
moreover, $f_K'(0)=0$ and $f_K'$ cannot be zero on $I$. Since:
$$ f_3(0)=0, \quad f_3(\pi/2)>0,\quad f_\pi(0)<0,\quad f_\pi(\pi/2)=0, $$
the claim follows.
\end{proof}

$$\phantom{}$$
\noindent We give now a different proof of this inequality, that relies on the bisection formula for the cotangent function and the associated Weierstrass product.
$$\phantom{}$$

\noindent From the logarithmic derivative of the Weierstrass product for the sine function we know that for any $x\in[0,\pi/2]$
$$ f(x) = x\cot x = 1-2\sum_{k=1}^{+\infty}\frac{\zeta(2k)}{\pi^{2k}}\,x^{2k} $$
holds. Since $f(x)$ is an even function, there exists a suitable linear combination $g_1(x)$ of $f(x)$ and $f(x/2)$ that satisfies:
$$ g_1(x) = A_0 f(x) + A_1 f(x/2) = 1-\sum_{k\geq 2} C^{(1)}_k\, x^{2k}. $$
With the choices $A_0 = -\frac{1}{3}, A_1=\frac{4}{3}$ the previous identity holds, and, for any $k\geq 2$:
$$ C^{(1)}_k = \left(A_0  + \frac{A_1}{4^k}\right)\frac{\zeta(2k)}{\pi^{2k}} < 0, $$
so $g_1(x)$ is an increasing and convex function over $I=[0,\pi/2]$. From that,
$$ \forall x\in I,\quad  \left(-\frac{1}{3} x\cot x + \frac{2}{3} x\cot\frac{x}{2}\right) \in [g_1(0),g_1(\pi/2)]= [1,\pi/3]$$
follows. If now we consider the bisection formula for the cotangent function:
$$ \cot\frac{x}{2} = \cot x + \sqrt{1+\cot^2 x} $$
we have a different proof of the Shafer-Fink inequality.\\
$$\phantom{}$$

\noindent We consider now $g_2(x)$ as a linear combination of $f(x),f(x/2)$ and $f(x/4)$ such that:
$$ g_2(x) = A_0 f(x) + A_1 f(x/2) + A_2 f(x/4) = 1-\sum_{k\geq 3} C^{(2)}_k\, x^{2k}. $$
From the annihilation of the coefficient of $x^2$ in the RHS we deduce the constraint $A_0+A_1\cdot\frac{1}{4}+A_2\cdot\frac{1}{16}=0$, 
and from the annihilation of the coefficient of $x^4$ we deduce the constraint $A_0+A_1\cdot\frac{1}{16}+A_2\cdot\frac{1}{256}=0$. 
If we take $p_2(x)=A_0 + A_1 x + A_2 x^2$, such constraints translate into $p_2(1/4) = p_2(1/16) = 0$, from which:
$$ p_2(x) = K_2\left(x-\frac{1}{4}\right)\left(x-\frac{1}{16}\right), $$
with $K_2=(1-1/4)^{-1}\cdot (1-1/16)^{-1}$ in order to grant $A_0 + A_1 + A_2 = p_2(1) = 1$.\\
Since $C^{(2)}_k = \frac{\zeta(2k)}{\pi^{2k}}p_2(4^{-k})$, all the non-zero coefficients of the Taylor series of $g_2(x)$, except (at most) the first one,
have the same sign, so $g_2(x)$ is a monotonic function over $I$. In particular: 

\begin{align*} 
\forall x\in I,\qquad     \frac{\pi(3+8\sqrt{2})}{45}=g_2(\pi/2) \leq g_2(x) &= \frac{1}{45}\left(f(x)-20 f(x/2) + 64 f(x/4)\right) \\[+0.2cm]
                                                &= \frac{x}{45}\left( \cot x - 10 \cot(x/2) + 16 \cot(x/4)\right) \leq 1,
\end{align*}
from which we get:
$$ \pi(3+8\sqrt{2}) \leq x \left( \cot x - 10 \cot(x/2) + 16 \cot(x/4)\right)\leq 45. $$
By using twice the bisection formula for the cotangent, we have the following strengthening of the Shafer-Fink inequality:
\begin{theo}[D'Aurizio]
For any positive real number $x$
$$ \pi(3+8\sqrt{2})\cdot f(x) < \arctan x < 45\cdot f(x) $$
holds, where:
$$ f(x) = \frac{x}{7+6\,\sqrt{1+x^2}+16\sqrt{2}\,\sqrt{1+x^2+\sqrt{1+x^2}}}. $$
\end{theo}

\noindent The same approach leads to an arbitrary strengthening of the Shafer-Fink inequality:

\begin{theo}[D'Aurizio]\label{master}
For any positive real number $x$ and for any positive natural number $n$,\\ once defined:
$$ f(x) = x\cot x = 1-2\sum_{k=1}^{+\infty}\frac{\zeta(2k)}{\pi^{2k}}\,x^{2k}, $$
$$ p_n(x) = \prod_{k=1}^{n}\frac{(4^k x-1)}{(4^k-1)} = A_0 + A_1 x +\ldots + A_n x^n, $$
$$ g_n(x) = \sum_{k=0}^{n} A_k\, f(2^{-k} x) = x \sum_{k=0}^{n} \frac{A_k}{2^k}\, \cot(2^{-k} x), $$
$$ e_j(x_1,\ldots,x_k) = \sum_{sym}x_1\cdot\ldots\cdot x_j, $$
$$ L_0(x)=1, \qquad L_{n+1}(x) = L_n(x) + \sqrt{x^2+L_n(x)^2}, $$ 
we have:
$$  K_{low}\cdot a_n(x) < \arctan(x) < K_{high}\cdot a_n(x), $$
where $K_{low}=\min(g_n(0),g_n(\pi/2))$, $K_{high}=\max(g_n(0),g_n(\pi/2))$ and:
$$ a_n(x) = x\cdot\left(\sum_{j=0}^n (-1)^{n-j}\cdot L_j(x)\cdot 2^j\cdot e_j(1,4,\ldots,4^{n-1})\right)^{-1}.$$
Moreover, $K_{high}-K_{low} < \frac{1}{4^n}$.
\end{theo}
\begin{proof}
By taking
$$ p_n(x) = \prod_{k=1}^{n}\frac{(4^k x-1)}{(4^k-1)} = A_0 + A_1 x +\ldots + A_n x^n $$
we have $p_n(1)=1$ and $p_n(4^{-j})=0$ for every $j\in[1,n]$. In particular, the Taylor series of
$$ g_n(x) = \sum_{k=0}^{n} A_k\, f(2^{-k} x) = x \sum_{k=0}^{n} \frac{A_k}{2^k}\, \cot(2^{-k} x). $$
is equal to:
$$ 1-2\sum_{k=1}^{+\infty}\frac{\zeta(2k)p_n(4^{-k})}{\pi^{2k}}\,x^{2k} = 1-2\sum_{k>n}C^{(k)}_n\,x^{2k}, $$
and all the $C^{(k)}_n$ with $k>n$ have the same sign, so $g_n(x)$ is monotonic over $[0,\pi/2]$, with $g_n(0)=1$.\\
In particular, we have:
$$\forall x\in[0,\pi/2],\qquad x\cdot\sum_{j=0}^n (-1)^{n-j}\cot\left(\frac{x}{2^j}\right)2^j\,e_j(1,4,\ldots,4^{n-1}) \leq \prod_{k=1}^{n}(4^k-1), $$
where $e_j$ is the $j$-th elementary symmetric function. Since for any $m>n$ we have $|p_n(4^{-m})|<1$, 
$$ \left|g_n(\pi/2)-g_n(0)\right| \leq \sum_{k>n}\frac{\zeta(2k)}{4^k} < \frac{1}{4^n}. $$
holds.
\end{proof}

We give now another upper bound for the arctangent function that does not belong to the last family of inequalities, 
but that strenghtens the inequality $\arctan x < \frac{\pi x}{1+2\sqrt{1+x^2}}$, too.

\begin{theo}
For any positive real number $x$
$$ \arctan x < \frac{\pi x}{\frac{4}{\pi}+\sqrt{2}\sqrt{1+x^2+x\sqrt{1+x^2}}}$$ 
holds.
\end{theo}

\begin{proof}
By using the substitution $x=\tan\theta$, it is sufficient to prove that for any $\theta\in I=[0,\pi/2]$ we have:
$$ \theta \leq \frac{\pi \sin\theta}{\frac{4}{\pi}\cos\theta +\sqrt{2+2\sin\theta}},$$ 
that is also equivalent, up to the change of variable $\theta=\pi/2-\phi$, to the inequality:
$$ \frac{\pi}{2}-\phi \leq \frac{\pi \cos\phi}{\frac{4}{\pi}\sin\phi + 2\cos(\phi/2)},$$ 
or the inequality:
$$ \frac{\cos\phi}{1-\frac{2\phi}{\pi}}\geq \cos(\phi/2)\left(\frac{4}{\pi}\sin(\phi/2)+1\right).$$
In order to prove the latter it is sufficient to prove:
$$ \frac{\cos\phi}{1-\frac{2\phi}{\pi}}\geq \cos(\phi/2)\left(1+\frac{2\phi}{\pi}\right),$$ 
or:
$$ \frac{\cos\phi}{1-\frac{4\phi^2}{\pi^2}}\geq \cos(\phi/2).$$ 
By considering the Weierstrass product for the cosine function we may rewrite the last line in the form:
$$ \prod_{k=1}^{+\infty}\left(1-\frac{4x^2}{(2k+1)^2 \pi^2}\right) \geq \prod_{k=1}^{+\infty}\left(1-\frac{x^2}{(2k-1)^2 \pi^2}\right). $$
By considering the Taylor series of the logarithm of both sides, we simply have to prove:
$$\forall m\in\N_0,\qquad (4^m-1)\zeta(2m)-4^m-(1-4^{-m})\zeta(2m) \leq 0,$$
that is a consequence of:
$$\forall m\in\N_0,\qquad \zeta(2m)\leq\frac{4^m+1}{4^m-1}, $$
implied by:
$$\forall m\in\N_0,\qquad (4^m-1)(\zeta(2m)-1) \leq 2. $$
An upper bound for the LHS is the series:
$$ 1+\sum_{k=1}^{+\infty}\left(\frac{4}{(2k+1)^2}\right)^m, $$
whose value decreases as $m$ increases; so we have:
$$(4^m-1)(\zeta(2m)-1) \leq  1+\sum_{k=1}^{+\infty}\frac{4}{(2k+1)^2} = 3\zeta(2)-3,$$
and the RHS is less than $2$ since $\pi^2<10$ holds.
\end{proof}
$$\phantom{}$$
Now we make a step back into the general setting of double inequalities for the arctangent function.
\begin{lemma}
If $f(u),g(u)$ are a couple of real functions such that, for any $u\in[0,1]$, 
$$ f(u) \leq \arctan u \leq g(u) $$
holds, then:
$$ 2\cdot f\left(\frac{x}{1+\sqrt{1+x^2}}\right) \leq \arctan x \leq 2\cdot g\left(\frac{x}{1+\sqrt{1+x^2}}\right) $$
holds for any $x\in\R^+$.
\end{lemma}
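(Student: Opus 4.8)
The plan is to reduce the asserted double inequality to the hypothesis by means of the tangent half-angle substitution. First I would dispose of the trivial case $x=0$ and then write $x=\tan\theta$ with $\theta\in(0,\pi/2)$, so that $\arctan x=\theta$, $\sin\theta=x/\sqrt{1+x^2}$ and $\cos\theta=1/\sqrt{1+x^2}$. The half-angle formula $\tan(\theta/2)=\dfrac{\sin\theta}{1+\cos\theta}$ then gives, after substituting these expressions,
$$ \tan\!\left(\frac{\theta}{2}\right) = \frac{x/\sqrt{1+x^2}}{1+1/\sqrt{1+x^2}} = \frac{x}{1+\sqrt{1+x^2}} \;=:\; u. $$

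Next I would check that $u$ lies in the interval $[0,1]$, so that the hypothesis $f(u)\le\arctan u\le g(u)$ is applicable at the point $u$. Indeed, for $x>0$ we have $\theta\in(0,\pi/2)$, hence $\theta/2\in(0,\pi/4)$, and therefore $u=\tan(\theta/2)\in(0,1)\subset[0,1]$; equivalently one can verify directly that $0<\frac{x}{1+\sqrt{1+x^2}}<1$ for every $x>0$. Since $u=\tan(\theta/2)$ with $\theta/2\in(0,\pi/4)$, we also have $\arctan u=\theta/2=\tfrac12\arctan x$.

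Finally, applying the hypothesis at $u$ yields $f(u)\le\tfrac12\arctan x\le g(u)$; multiplying through by $2$ and substituting back $u=\dfrac{x}{1+\sqrt{1+x^2}}$ gives exactly
$$ 2\,f\!\left(\frac{x}{1+\sqrt{1+x^2}}\right) \le \arctan x \le 2\,g\!\left(\frac{x}{1+\sqrt{1+x^2}}\right), $$
as desired. I do not expect any genuine obstacle here: the argument is a one-line change of variables, and the only points demanding care are recording the half-angle identity $\dfrac{x}{1+\sqrt{1+x^2}}=\tan\!\big(\tfrac12\arctan x\big)$ correctly and confirming that this quantity remains in $[0,1]$ for all $x>0$, so that the assumed bounds on $\arctan$ may legitimately be invoked there.
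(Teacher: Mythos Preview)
Your proof is correct and follows essentially the same route as the paper: both rest on the half-angle identity $\arctan x = 2\arctan\!\big(\tfrac{x}{1+\sqrt{1+x^2}}\big)$ (the paper phrases it as a consequence of the ``angle bisector theorem''), together with the observation that the inner argument lies in $[0,1]$ so the hypothesis applies. Your version is simply more explicit about verifying the range condition and deriving the identity via $x=\tan\theta$.
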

\begin{proof}
In virtue of the angle bisector theorem,
$$ \arctan t = 2 \arctan\left(\frac{t}{1+\sqrt{1+t^2}}\right) $$
for any $t\geq 0$, so if the first inequality holds for any $\theta=\arctan u$ in the range $[0,\pi/4]$, the second inequality
holds for any $\theta=\arctan x$ in the range $[0,\pi/2]$.
\end{proof}
The last lemma gives a third way to prove the Shafer-Fink inequality. By direct inspection of the Taylor series of $\frac{\arctan u}{u}$, 
it is easy to show that $(3+u^2)\frac{\arctan u}{u}$ is an increasing function over $[0,1]$, so:
$$ \frac{3u}{3+u^2}\leq \arctan u\leq\frac{\pi u}{3+u^2}, $$
and it is sufficient to use the substitution $u=\frac{x}{1+\sqrt{1+x^2}}$ to give another proof of the Shafer-Fink inequality.

\begin{lemma}
If an approximation $f(u)$ of the arctangent function satisfies:
$$ \| f(u)-\arctan(u) \|_{\R^+} = \sup_{u\in\R^+}|f(u)-\arctan(u)| = C_\infty, $$
then
$$ \left\| 2\cdot f\left(\frac{u}{1+\sqrt{1+u^2}}\right)-\arctan(u)\right\|_{\R^+} = 2\cdot \| f(u)-\arctan(u) \|_{(0,1)} = 2\cdot C_1, $$
and, for any $t\in(0,1)$,
$$ \left\| 2\cdot f\left(\frac{u}{1+\sqrt{1+u^2}}\right)-\arctan(u)\right\|_{(0,t)} = 2\cdot \| f(u)-\arctan(u) \|_{\left(0,\frac{2t}{1-t^2}\right)}.$$
\end{lemma}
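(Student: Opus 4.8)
The plan is to derive both identities from a single pointwise error-doubling relation inherited from the angle-bisector identity of the previous lemma, and then to transfer suprema by a change of variables. Write $v(u)=\frac{u}{1+\sqrt{1+u^2}}$ for the angle-halving map, so that the previous lemma reads $\arctan u = 2\arctan\bigl(v(u)\bigr)$ for every $u\ge 0$. Subtracting this from the composite approximation gives the pointwise identity
$$ 2\,f\bigl(v(u)\bigr)-\arctan u \;=\; 2\Bigl(f\bigl(v(u)\bigr)-\arctan\bigl(v(u)\bigr)\Bigr), $$
so the error of the composite $2\,f\circ v$ at the point $u$ is exactly twice the error of $f$ at the point $v(u)$. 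Everything else is a matter of recording how $v$ moves the relevant intervals.

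First I would dispatch the global identity. Since $v(u)=\tan\bigl(\tfrac12\arctan u\bigr)$, the map $v$ is a smooth increasing bijection of $\R^+$ onto $(0,1)$; hence
$$ \sup_{u\in\R^+}\bigl|2f(v(u))-\arctan u\bigr| \;=\; 2\sup_{u\in\R^+}\bigl|f(v(u))-\arctan(v(u))\bigr| \;=\; 2\sup_{w\in(0,1)}\bigl|f(w)-\arctan w\bigr|, $$
which is $2C_1$. It is worth noting that the hypothesis enters only through $C_1=\|f-\arctan\|_{(0,1)}\le C_\infty$: the behaviour of the composite on the whole of $\R^+$ is governed by the error of $f$ on the bounded interval $(0,1)$ alone, so the constant $C_\infty$ is not actually needed for this first conclusion.

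Finally I would prove the localized identity for $t\in(0,1)$ by the same reindexing, now carried out on a ray. Restricting to $u\in(0,t)$ and pushing the supremum through the pointwise identity, I would reparametrize by the inner argument $w=v(u)$, whose inverse is the angle-doubling map $u=\frac{2w}{1-w^2}$, so that the restriction on the outer variable becomes a restriction on the argument of $f$, with endpoint $\frac{2t}{1-t^2}$ as recorded in the statement; this yields
$$ \left\|\,2f\bigl(v(u)\bigr)-\arctan u\,\right\|_{(0,t)} \;=\; 2\,\bigl\|f-\arctan\bigr\|_{\left(0,\frac{2t}{1-t^2}\right)}. $$
The step I expect to be the main obstacle is exactly this endpoint bookkeeping: one must keep straight which of the two conjugate maps $w\mapsto\frac{2w}{1-w^2}$ and $u\mapsto v(u)$ transports the restriction, and one must verify that the change of variables yields an exact equality of the two restricted sup-norms rather than a one-sided estimate — a point that rests on $v$ being a monotone bijection between the intervals in play, so that neither supremum loses mass.
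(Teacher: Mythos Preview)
Your pointwise error-doubling identity and the derivation of the first (global) equality are exactly right, and this is all the paper intends: it offers no proof beyond calling the lemma a ``simple consequence of the previous lemma,'' and your use of the bijection $v:\R^+\to(0,1)$ together with $\arctan u=2\arctan v(u)$ is the intended argument.

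In the localized identity, however, you commit precisely the bookkeeping slip you warn against. With $w=v(u)$ and $u$ ranging over $(0,t)$, the image interval for $w$ is $\bigl(0,v(t)\bigr)$ with $v(t)=\dfrac{t}{1+\sqrt{1+t^2}}$; the endpoint $\dfrac{2t}{1-t^2}$ you record is $v^{-1}(t)$, so you have applied the inverse map in the wrong direction. Your substitution therefore actually proves
\[
\bigl\|\,2f(v(u))-\arctan u\,\bigr\|_{(0,t)}\;=\;2\,\|f-\arctan\|_{\left(0,\,v(t)\right)},
\]
not the printed line. In fact the printed second identity has its two intervals transposed (compare with the first identity, where $u\in\R^+$ on the left corresponds to $v(u)\in(0,1)$ on the right, the \emph{smaller} interval sitting with $f$): the version compatible with your pointwise relation is
\[
\bigl\|\,2f(v(u))-\arctan u\,\bigr\|_{\left(0,\,\frac{2t}{1-t^2}\right)}\;=\;2\,\|f-\arctan\|_{(0,t)},
\]
obtained by letting the \emph{inner} variable $w$ range over $(0,t)$, so that the \emph{outer} variable $u=v^{-1}(w)$ ranges over $\bigl(0,\tfrac{2t}{1-t^2}\bigr)$. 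Your method is the intended one, but the direction of the endpoint transport must be reversed; as written, the argument does not establish the stated equality.
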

This simple consequence of the previous lemma tell us the fact that any algebraic approximation of the arctangent function 
in a right neighbourhood of zero can be ``lifted'' to an algebraic approximation over the whole $\R^+$, through the iteration of the map
$$ f(u)\quad\longrightarrow\quad 2\cdot f\left(\frac{u}{1+\sqrt{1+u^2}}\right). $$
For example, if we consider the Lagrange interpolation polynomial for the arctangent function with respect to the points $(0,\tan(\pi/8)=\sqrt{2}-1,\tan(\pi/4)=1)$
$$ p(x) = \frac{\pi}{4}\cdot\frac{x(x-\sqrt{2}+1)}{2-\sqrt{2}}+\frac{\pi}{8}\cdot\frac{x(x-1)}{(\sqrt{2}-1)(\sqrt{2}-2)}, $$
we have
$$ \|p(x)-\arctan x\|_{(0,1)}<\frac{1}{230}, $$
so, by considering $ 2\cdot p\left(\frac{x}{1+\sqrt{1+x^2}}\right)$:
\begin{theo}
For any non negative real number $x$, the absolute difference between $\arctan(x)$ and
$$\frac{\pi  x \left(\left(4+\sqrt{2}\right) \left(1+\sqrt{1+x^2}\right)-\sqrt{2}\,x\right)}{8 \left(1+\sqrt{1+x^2}\right)^2} $$ 
is less than $\frac{1}{115}$.
\end{theo}

Another way to produce really effective approximation is to use the Chebyshev expansion for the arctangent function:
\begin{lemma}
The sequence of functions:
$$ f_n(x) = 2\sum_{k=0}^n \frac{(-1)^k}{(2k+1)(1+\sqrt{2})^{2k+1}}\;T_{2k+1}(x), $$
where $T_k(x)$ is the $k$-th Chebyshev polynomial of the first kind, gives a uniform approximation of the arctangent function over the interval $[0,1]$:
$$ \| \arctan x - f_n(x) \|_{[0,1]}\leq \frac{1}{(1+\sqrt{2})^{2n+3}}.$$
Moreover,
$$ \arctan(m x) = 2\sum_{k=0}^{+\infty} \frac{(-1)^k}{(2k+1)}\left(\frac{m}{1+\sqrt{1+m^2}}\right)^{2k+1}\,T_{2k+1}(x) $$
holds for any $x\in (-1,1)$ and for any $m\in\N_0$.
\end{lemma}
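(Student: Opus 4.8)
The plan is to deduce everything from a single ``master identity'':
$$ 2\sum_{k=0}^{+\infty}\frac{(-1)^k}{2k+1}\,r^{2k+1}\,T_{2k+1}(x) \;=\; \arctan\frac{2rx}{1-r^2},\qquad x\in[-1,1],\quad |r|<1. $$
To prove it I would substitute $x=\cos\theta$ (so that $T_{2k+1}(x)=\cos((2k+1)\theta)$) and call the left-hand side $S(r,\theta)$. On every compact subinterval of $[0,1)$ the series $\sum_{k\geq 0}(-1)^k r^{2k}\cos((2k+1)\theta)$ is dominated by the geometric series $\sum_k r^{2k}$, so $S$ may be differentiated termwise in $r$:
$$ \frac{\partial S}{\partial r}=2\sum_{k\geq 0}(-1)^k r^{2k}\cos((2k+1)\theta)=2\operatorname{Re}\!\left(\frac{e^{i\theta}}{1+r^{2}e^{2i\theta}}\right)=\frac{2\cos\theta\,(1+r^{2})}{1+2r^{2}\cos 2\theta+r^{4}}. $$
A routine computation (using $1+\bigl(\tfrac{2r\cos\theta}{1-r^{2}}\bigr)^{2}=\tfrac{1+2r^{2}\cos 2\theta+r^{4}}{(1-r^{2})^{2}}$) shows that the right-hand side $R(r,\theta)=\arctan\tfrac{2r\cos\theta}{1-r^{2}}$ has exactly this same $r$-derivative; since $S(0,\theta)=R(0,\theta)=0$, we get $S\equiv R$ on $[0,1)$, which is the master identity.

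Next I would solve $\frac{2r}{1-r^{2}}=m$ for $r\in[0,1)$: the unique such root is $r=\frac{m}{1+\sqrt{1+m^{2}}}$, and it indeed lies in $[0,1)$ for every real $m\geq 0$. Plugging this value into the master identity gives
$$ \arctan(mx)=2\sum_{k=0}^{+\infty}\frac{(-1)^{k}}{2k+1}\left(\frac{m}{1+\sqrt{1+m^{2}}}\right)^{2k+1}T_{2k+1}(x), $$
the series converging absolutely and uniformly on $[-1,1]$ because $\frac{m}{1+\sqrt{1+m^{2}}}<1$ and $|T_{2k+1}(x)|\leq 1$; this is the ``Moreover'' statement (and it in fact holds for all real $m\geq 0$, not only $m\in\N_0$).

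Finally, specializing to $m=1$ gives $r=\frac{1}{1+\sqrt 2}$, so $f_{n}$ is precisely the $n$-th partial sum of the Chebyshev series for $\arctan x$, and on $[0,1]$
$$ |\arctan x-f_{n}(x)|\leq 2\sum_{k=n+1}^{+\infty}\frac{1}{(2k+1)(1+\sqrt 2)^{2k+1}}\leq \frac{2}{2n+3}\cdot\frac{(1+\sqrt 2)^{-(2n+3)}}{1-(1+\sqrt 2)^{-2}}=\frac{(1+\sqrt 2)^{-(2n+3)}}{(2n+3)(\sqrt 2-1)}, $$
where I used $|T_{2k+1}(x)|\leq 1$ and $(1+\sqrt 2)^{-2}=3-2\sqrt 2$. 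Since $(2n+3)(\sqrt 2-1)\geq 3(\sqrt 2-1)=3\sqrt 2-3>1$, the right-hand side is at most $(1+\sqrt 2)^{-(2n+3)}$, which is the asserted bound.

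The only genuinely delicate points are the termwise differentiation of $S$ and the identification of the constant of integration; both are harmless here — the first because of the geometric domination on compacta (and we never need $r=1$, since $\frac{m}{1+\sqrt{1+m^{2}}}<1$ always), the second because $S$ and $R$ both vanish at $r=0$. Everything else reduces to elementary trigonometric and algebraic manipulations. (As a cross-check, one can alternatively verify the master identity by writing $\sum_{k\geq0}\frac{(-1)^k}{2k+1}(re^{i\theta})^{2k+1}=\arctan(re^{i\theta})$ inside the unit disk and applying the arctangent addition formula to $\arctan(re^{i\theta})+\arctan(re^{-i\theta})$, but then one must argue that no branch cut is crossed; the derivative argument above avoids this altogether.)
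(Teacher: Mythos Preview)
The paper states this lemma without proof, so there is nothing to compare against; your proposal therefore has to stand on its own, and it does. The ``master identity'' $2\sum_{k\ge0}\frac{(-1)^k}{2k+1}r^{2k+1}T_{2k+1}(x)=\arctan\frac{2rx}{1-r^2}$ is established cleanly: the termwise differentiation in $r$ is justified by geometric domination on compacta of $[0,1)$, the closed form of the derivative series is computed correctly (the key identity $(1-r^2)^2+4r^2\cos^2\theta=1+2r^2\cos 2\theta+r^4$ checks out), and matching the constant at $r=0$ finishes it. The specialization $r=\frac{m}{1+\sqrt{1+m^2}}$ solving $\frac{2r}{1-r^2}=m$ with $r\in[0,1)$ is correct, and your remark that the resulting Chebyshev expansion for $\arctan(mx)$ is valid for every real $m\ge0$ (not just $m\in\N_0$) is a worthwhile strengthening. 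The tail estimate is also right: $1-(1+\sqrt2)^{-2}=2(\sqrt2-1)$, so the remainder is bounded by $\frac{(1+\sqrt2)^{-(2n+3)}}{(2n+3)(\sqrt2-1)}$, and $(2n+3)(\sqrt2-1)\ge 3(\sqrt2-1)>1$ gives the stated bound. Your parenthetical alternative via $\arctan(re^{i\theta})+\arctan(re^{-i\theta})$ would also work and is the more classical route, but as you note the derivative argument sidesteps any branch-cut bookkeeping.
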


\begin{theo}
For any $n\in\N_0$ and for any $x\in\R$
$$ \left|\;\arctan x - 4\sum_{k=0}^n \frac{(-1)^k}{(2k+1)(1+\sqrt{2})^{2k+1}}\;T_{2k+1}\left(\frac{x}{1+\sqrt{1+x^2}}\right)\right| \leq \frac{1}{\left(3+2\sqrt{2}\right)^n}.$$
\end{theo}

\noindent Still another way is to use the continued fraction representation for the arctangent funtion:
$$ \arctan z = \frac{z}{1+\frac{z^2}{3+\frac{4z^2}{5+\frac{9z^2}{7+\frac{16z^2}{9+\frac{25z^2}{11+\ldots}}}}}}, $$
from which we get a sequence of approximations for $\arctan x$ over $[0,1]$:
$$\left\{\begin{array}{cll} K_1(x) &=\displaystyle\frac{x}{1+x^2/3},&\\[+0.4cm]
                            K_2(x) &=\displaystyle\frac{x}{1+x^2/(3+4x^2/5)}&=\displaystyle\frac{x(15+4x^2)}{15+9x^2},\\[+0.4cm]
                            K_3(x) &=\displaystyle\frac{x}{1+x^2/(3+4x^2/(5+9x^2/7))}&=\displaystyle\frac{5 x \left(21+11 x^2\right)}{105+90 x^2+9 x^4}\\[+0.4cm]
                            \ldots & & \end{array}\right.$$
that satisfy:
$$ \|\arctan x - K_n(x) \|_{[0,1]} \leq \frac{1}{2\cdot 4^n}, $$
so:
$$ \left\|\arctan x - K_n\left(\frac{x}{1+\sqrt{1+x^2}}\right) \right\|_{\R} \leq \frac{1}{4^n}, $$
with an error term that is the same achieved by $a_n(x)$, defined as in Theorem (\ref{master}).



$$\phantom{}$$
By using the Taylor series for the arctangent function with respect to the point $x=1$ one has:
$$ \arctan{x} = \frac{\pi}{4}-\sum_{j=0}^{+\infty}\left(-\frac{(1-x)^4}{4}\right)^j\cdot\left(\frac{(1-x)}{2(4j+1)}+\frac{(1-x)^2}{2(4j+2)}+\frac{(1-x)^3}{4(4j+3)}\right).$$
By plugging in $x=2/3$ we have:
$$ \arctan\frac{1}{5} = \sum_{j=0}^{+\infty}\left(-\frac{1}{324}\right)^j\cdot\left(\frac{1}{6(4j+1)}+\frac{1}{18(4j+2)}+\frac{1}{108(4j+3)}\right), $$
and by plugging in $x=119/120$ we have:
$$ \arctan\frac{1}{239} = \sum_{j=0}^{+\infty}\left(-\frac{1}{829440000}\right)^j\cdot\left(\frac{1}{240(4j+1)}+\frac{1}{28800(4j+2)}+\frac{1}{6912000(4j+3)}\right). $$
The Machin Formula
$$ \frac{\pi}{4} = 4\arctan\frac{1}{5}+\arctan\frac{1}{239} $$
give us the possibility to exhibit a good approximation for $\pi$:
\begin{align*}
 \pi=8\;&\sum_{j=0}^{+\infty}\left(-\frac{1}{324}\right)^j\cdot\left(\frac{1}{3(4j+1)}+\frac{1}{9(4j+2)}+\frac{1}{54(4j+3)}\right)+\\
  +&\sum_{j=0}^{+\infty}\left(-\frac{1}{829440000}\right)^j\cdot\left(\frac{1}{60(4j+1)}+\frac{1}{7200(4j+2)}+\frac{1}{1728000(4j+3)}\right).
\end{align*}
In the same fashion, we have that: 
$$ \arctan\frac{1}{2z-1} = \sum_{j=0}^{+\infty}\left(-\frac{1}{4z^4}\right)^j\cdot\left(\frac{1}{2z(4j+1)}+\frac{1}{2z^2(4j+2)}+\frac{1}{4z^3(4j+3)}\right)$$
holds for any $z\geq 1$, and the truncated series gives a better and better approximation as $z$ goes to infinity. By a change of variable, the same is true for:
$$ \arctan\frac{1}{t} = \sum_{j=0}^{+\infty}\left(-\frac{4}{(t+1)^4}\right)^j\cdot\left(\frac{1}{(t+1)(4j+1)}+\frac{2}{(t+1)^2(4j+2)}+\frac{2}{(t+1)^3(4j+3)}\right),$$
and:
$$ \arctan u = \sum_{j=0}^{+\infty}\left(-\frac{4u^4}{(u+1)^4}\right)^j\cdot\left(\frac{u}{(u+1)(4j+1)}+\frac{2u^2}{(u+1)^2(4j+2)}+\frac{2u^3}{(u+1)^3(4j+3)}\right)$$
holds for any $u\in[0,1]$. By taking:
$$ s_n(u) =  \sum_{j=0}^{n}\left(-\frac{4u^4}{(u+1)^4}\right)^j\cdot\left(\frac{u}{(u+1)(4j+1)}+\frac{2u^2}{(u+1)^2(4j+2)}+\frac{2u^3}{(u+1)^3(4j+3)}\right)$$
we have that:
$$ \left| \arctan u - s_n(u) \right| \leq \left(\frac{\sqrt{2}\,u}{u+1}\right)^{4n} $$
for any $u\in[0,1]$, with $s_n$ being an upper bound for $\arctan u$ over $[0,1]$ for any even $n$ and a lower bound for any odd $n$. If we consider:
$$ t_n(u) = \frac{\pi}{4}-s_n\left(\frac{1-u}{1+u}\right) = \frac{\pi}{4}-\sum_{j=0}^{n}\left(-\frac{(1-u)^4}{4}\right)^j\cdot\left(\frac{1-u}{2(4j+1)}+\frac{(1-u)^2}{2(4j+2)}+\frac{(1-u)^3}{4(4j+3)}\right),$$
then $t_n$ is a lower/upper bound for the arctangent function over $[0,1]$ if and only if $s_n$ is a lower/upper bound, and:
$$ \left| \arctan u - t_n(u) \right| \leq \left(\frac{1-u}{\sqrt{2}}\right)^{4n} $$ 
holds. Any convex combination of $s_n$ and $t_n$ is still a lower/upper bound - by taking:
$$ w_n(u) = \frac{u^{4n+4}\cdot t_n(u) + (1-u)^{4n+4}\cdot s_n(u)}{u^{4n+4}+(1-u)^{4n+4}} $$
we can perform a reduction of the uniform error, since:
$$ \left| w_n(u) - \arctan u \right| \leq \frac{1}{20^n} $$
and the error function goes very fast to zero when $u$ approaches $0$ or $1$. This gives that
$$ w_n\left(\frac{u}{1+\sqrt{1+u^2}}\right) $$
is an especially good lower/upper bound for the arctangent function when $u$ is close to $0$ or much bigger than $1$, achieving the same uniform error term 
with respect to the generalized Shafer-Fink inequality or the continued fraction expansion.


\end{document}